\newcommand\nc\newcommand
\nc\Z{\mathbb Z}
\nc\E{\mathbb E}
\nc\eqd{\overset{d}{=}}
\nc\Star{\raisebox{-1pt}{\mbox{\ding{65}}}}
\DeclareMathOperator
\dc\Cycle{\mu}
\renewcommand
\rc\P{\mathbb P}
\rc\labelenumi{(\roman{enumi})}
\newtheorem
\title{Finitely dependent cycle coloring}
\date{28 July 2017}
\author[Alexander E. Holroyd]{Alexander E.\ Holroyd}
\address{Alexander E.\ Holroyd} \email{holroyd@math.ubc.ca}
\urladdr{\url{http://aeholroyd.org}}
\author{Tom Hutchcroft}
\address{Tom Hutchcroft, Department of Mathematics, University of British Columbia}
\email{tomhutchcroft@gmail.com}
\author{Avi Levy}
\address{Avi Levy, Department of Mathematics, University of Washington}
\email{avius@math.washington.edu}
\urladdr{\url{http://math.washington.edu/~avius}}
\keywords{Proper coloring, finite dependence, block factor, necklace}
\subjclass[2010]{60G10; 05C15; 05A05}
\begin{document}

\begin{abstract}
  We construct stationary finitely dependent colorings of the cycle which are analogous to the colorings of the integers recently constructed by Holroyd and Liggett. These colorings can be described by a simple necklace insertion procedure, and also in terms of an Eden growth model on a tree. Using these descriptions we obtain simpler and more direct proofs of the characterizations of the 1- and 2-color marginals.
\end{abstract}

\maketitle

\section{Introduction}
  A random process indexed by the vertex set of a graph is $k$\textbf{-dependent} if its restrictions to any two sets of vertices at graph distance greater than $k$ are independent of each other. A process is \textbf{finitely dependent} if it is $k$-dependent for some finite $k$. For several decades it was not known whether every stationary finitely dependent process on $\Z$ is a \textbf{block factor} of an i.i.d.\ process, that is, the image of an i.i.d.\ process under a finite range function that commutes with translations. This question was raised by Ibragimov and Linnik \cite{ibragimov1965independent} in 1965 and resolved in the negative by Burton, Goulet, and Meester \cite{burton1993} in 1993.  Recently Holroyd and Liggett \cite{HL} proved that \emph{proper coloring} distinguishes between block factors and finitely dependent processes: block factor proper colorings of $\Z$ do not exist, but finitely dependent stationary proper colorings do.  In fact, these colorings fit into a more general family constructed subsequently in \cite{hhlMalCol}. See also \cite{H,HL2}.

  The finitely dependent colorings of \cite{HL} have short but mysterious descriptions. An interesting feature of these colorings is that the supports of proper subsets of the colors can each be expressed as block factors of i.i.d.\ \cite[Theorem~4]{HL}, even though the coloring as a whole cannot \cite[Proposition~2]{HL}.  Moreover, their descriptions as block factors are remarkably simple and explicit.  However, the proofs used to obtain these descriptions in \cite{HL} were in some cases quite involved.
  Here we introduce a more canonical construction via colorings of the $n$-cycle, which can be expressed in terms of a necklace insertion process akin to those in \cite{mallowsSheppNecklace,nakataNecklace}, or in terms of the classical Eden growth model on a 3-regular tree. Using this new construction, we are able to obtain simpler and more direct proofs of the statements concerning subsets of colors mentioned above.

  The necklace insertion process is as follows. Suppose we have a necklace of colored beads. Start with 3 beads with uniformly random distinct colors from $\{1,\ldots,q\}$. At each step, pick a uniformly random gap between consecutive beads and insert a bead with uniformly random color differing from those of the two neighbors. After $n-1$ steps we have a coloring $(C_1,\ldots,C_{n+2})$ of the $(n+2)$-cycle.

  Here is a different description of the above process, which is easily seen to be equivalent (see Section~\ref{sec:index} for details).  Consider a planar embedding of the 3-regular tree $\mathbb T$ (with a distinguished root vertex), together with its planar dual $\mathbb D$, which is an infinite-degree triangulation (see Figure~\ref{fig:cycleColor}).  For a vertex $v$ of $\mathbb T$, let $\Delta(v)$ be the set of three vertices in $\mathbb D$ incident to the face dual to $v$.  We consider the Eden growth model \cite{eden1961two} on $\mathbb T$, which is a random growing sequence of clusters $T_1,T_2,\ldots$ defined as follows.  The initial cluster $T_1$ consists of the root vertex.  Given $T_n$, we choose a vertex uniformly at random from those adjacent to but not lying in $T_n$, and add it to $T_n$ to form the new cluster $T_{n+1}$.  Let $D_n$ be the subgraph of $\mathbb{D}$ induced by the vertex set $\bigcup_{v \in T_n} \Delta(v)$. The graph $D_n$ inherits a planar embedding from $\mathbb D$, in which there is one outer face containing all of its $n+2$ vertices, and all other faces are triangles. Let $q\geq 3$ be an integer.  Conditional on $D_n$, choose a uniformly random proper $q$-coloring of $D_n$ and, independently, a uniform vertex $u$ of $D_n$. Let $C_1,\ldots,C_{n+2}$ be the sequence of colors of the vertices of the outer face in clockwise order starting from $u$.

  \begin{figure}[t]
    \includegraphics[width=0.49\textwidth]{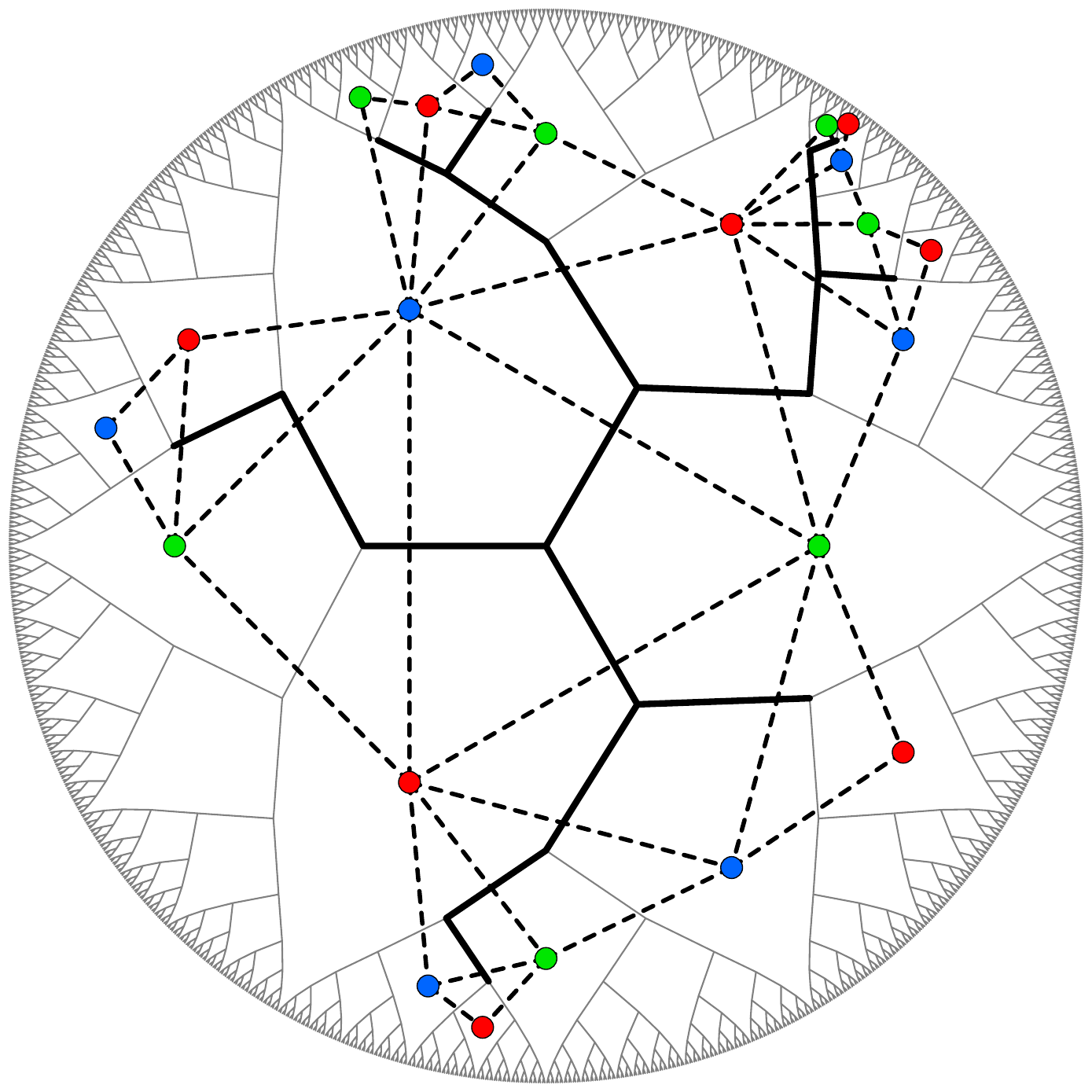}
    \hfill
    \includegraphics[width=0.49\textwidth]{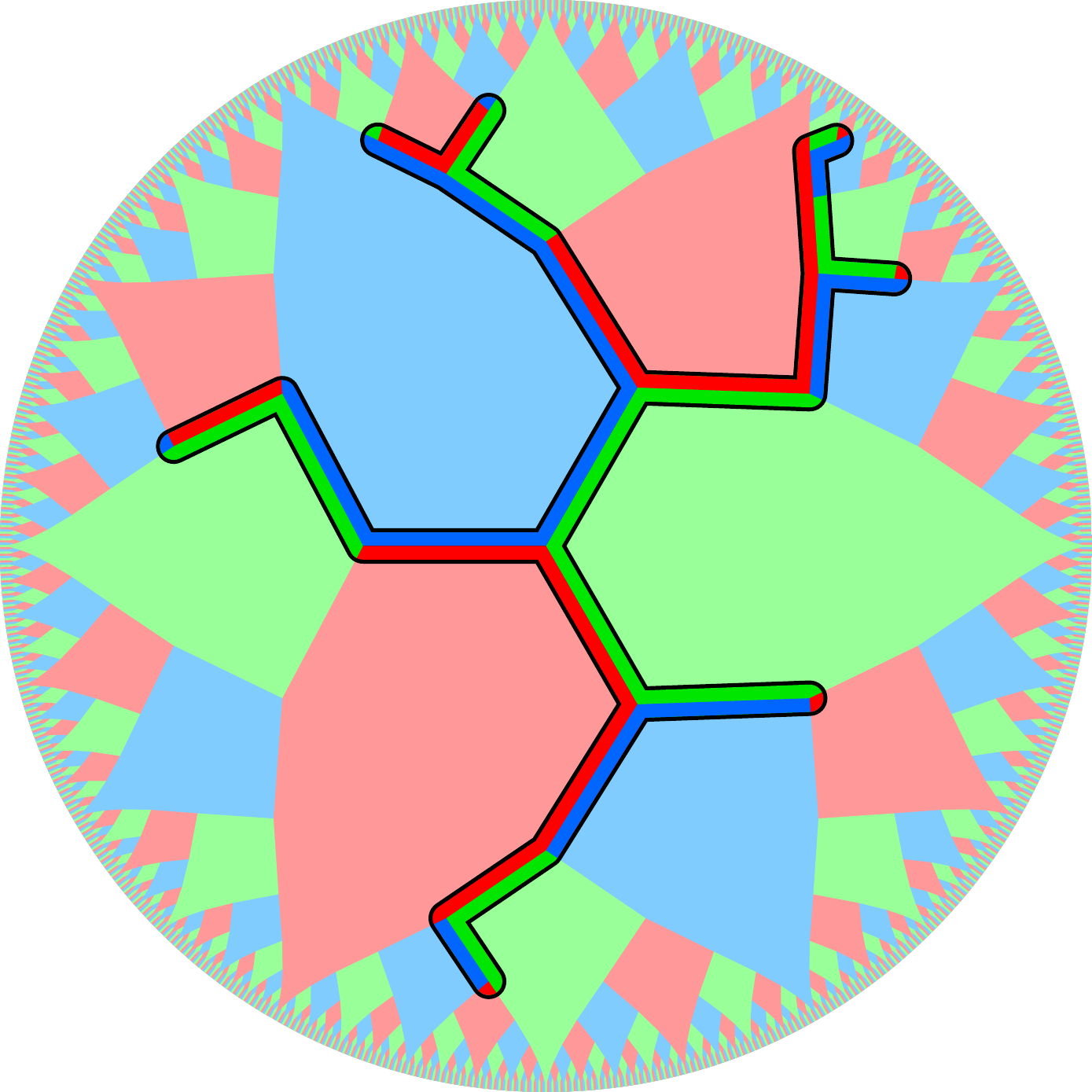}
    \caption{\label{fig:cycleColor} Two versions of the construction of the 2-dependent 3-coloring of the cycle, from a cluster of the Eden model on a 3-regular tree. On the left we uniformly properly 3-color the planar map comprising the dual triangles of the vertices of the cluster. The coloring is read clockwise around the outer face.  On the right, we may alternatively fix a uniform proper coloring of the infinite dual map in advance. The coloring is read around the outer boudary of the cluster.}
  \end{figure}

  \begin{samepage}
  \begin{thm}\label{thm:main}
    Fix $n\geq 1$ and $(k,q)\in \{(1,4),(2,3)\}$. The sequence $(C_1,\ldots,C_{n+2})$ constructed above is a $k$-dependent proper $q$-coloring of the $(n+2)$-cycle.  The coloring is symmetric in law under rotations and reflections of the cycle and permutations of the colors.  Moreover, the sequence $(C_1,\ldots,C_{n+2-k})$ is equal in law to $(X_1,\ldots,X_{n+2-k})$ where $(X_i)_{i\in\Z}$ is the stationary $k$-dependent $q$-coloring of $\Z$ constructed in \cite{HL}.
  \end{thm}
  \end{samepage}

  A third description of our construction is as follows. It is possible to define the uniformly random proper $q$-coloring of the infinite graph $\mathbb D$ (by consistency). When $q=3$ this coloring is simply a uniform choice from the $3!$ proper $q$-colorings of $\mathbb D$, corresponding to the permutations of the colors. (On the other hand when $q=4$, there are uncountably many such colorings.) We can now choose the Eden model cluster $T_n$ independently of the coloring of $\mathbb D$. The coloring of Theorem~\ref{thm:main} then arises as the sequence of colors appearing in clockwise order around the outer boundary of $T_n$. See Figure~\ref{fig:cycleColor}.

  We next address the one- and two-color marginals of the colorings.

  \begin{thm}\label{thm:onetwo}
    Let $I$ be either $\Z$, or $\{1,\ldots,n\}$ for $n\geq 3$ (interpreted as the vertex set of a cycle). Let $X=(X_i)_{i\in I}$ be the 1-dependent 4-coloring and let $Y=(Y_i)_{i\in I}$ be the 2-dependent 3-coloring of $I$ (arising from \cite{HL} in the $\Z$ case, or from Theorem~\ref{thm:main} in the cycle case).
    \begin{enumerate}
      \item The process $(\mathbbm{1}[X_i\in\{1,2\}])_{i\in I}$ is equal in law to $(\mathbbm{1}[U_i>U_{i+1}])_{i\in I}$, where $(U_i)_{i\in I}$ are i.i.d.\ uniform on $[0,1]$.
      \item The process $(\mathbbm{1}[Y_i=1])_{i\in I}$ is equal in law to $(\mathbbm{1}[U_{i-1}<U_i>U_{i+1}])_{i\in I}$, where $(U_i)_{i\in I}$ are i.i.d.\ uniform on $[0,1]$.
      \item The process $(\mathbbm{1}[X_i=1])_{i\in I}$ is equal in law to $(\mathbbm{1}[B_i>B_{i+1}])_{i\in I}$, where $(B_i)_{i\in I}$ are i.i.d.\ taking values $0,1$ with equal probabilities.
    \end{enumerate}
    All indices are interpreted modulo $n$ in the cycle case.
  \end{thm}

  Color symmetry (Theorem~\ref{thm:main}) of course implies that any other colors may be substituted for $1$ and $2$ in (i)--(iii) above.
  The $\Z$ case of this theorem was proved in \cite[Theorem~4]{HL}, but the proof was quite complicated and mysterious. We will deduce the $\Z$ case from the cycle case, which in turn will be proved by very simple and direct methods.  The idea will be to construct the various cycle-indexed processes via Markovian necklace insertion procedures like the one already mentioned, and to couple them to each other.

  The insertion procedure corresponding to part (ii) was studied earlier in \cite{mallowsSheppNecklace,nakataNecklace}.  The fact that this procedure results in the peak set of a random permutation of the cycle (as follows from our proof of (ii) above) gives simpler proofs of many of the results of those papers.

  Notwithstanding the simplicity of our proof of Theorem~\ref{thm:onetwo}, these characterizations of marginal processes remain rather
  mysterious.  In particular, we do not know whether a similar characterization exists for the following variant.  Consider the function $\iota:\{1,2,3,4\}\to\{1,2,\Star\}$ given by
  $$\iota(1)=1;\quad \iota(2)=2; \quad \iota(3)=\iota(4)=\Star.$$
  Letting $(X_i)_{i\in\Z}$ be the $1$-dependent $4$-coloring of $\Z$ from
  \cite{HL}, we do not know whether the process $(\iota(X_i))_{i\in \Z}$ has a simple description similar to those in Theorem~\ref{thm:onetwo}.  More precisely, a process
  $(Z_i)_{i\in\Z}$ is called $r$-block-factor of an i.i.d.\ process $(U_i)_{i\in\Z}$ if
  $Z_i=f(U_{i},\ldots,U_{i+r-1})$ for all $i$ and a fixed function $f$. Every $(k+1)$-block-factor of an i.i.d.\ process is $k$-dependent (but no coloring is a block factor of an i.i.d.\ process \cite[Proposition 2]{HL}).  We do not know whether $(\iota(X_i))_{i\in \Z}$ is a block factor of an i.i.d.\ process.  Since the process is $1$-dependent, one might expect it to be a 2-block factor (by analogy with Theorem~\ref{thm:onetwo}).  We show that this is not the case.

  \begin{thm}\label{onetwostar}
  Let $(X_i)_{i\in\Z}$ be the $1$-dependent $4$-coloring of $\Z$ from
  \cite{HL}.  The process $(\iota(X_i))_{i\in\Z}$ cannot be expressed as a
  $2$-block-factor of an i.i.d.\ process.
  \end{thm}

  In Section~\ref{sec:rec} we give an alternative combinatorial construction of the colorings in terms of recurrences. We prove Theorem~\ref{thm:main} in Section~\ref{sec:index} after showing that the coloring arising from the Eden model coincides with the combinatorial construction. Theorem~\ref{thm:onetwo} is proved in Section~\ref{sec:markov} and Theorem~\ref{onetwostar} in Section~\ref{sec:block}.

\section{Recurrences}\label{sec:rec}
  For a positive integer $m$, we write $[m]:=\{1,\ldots,m\}$. Fix a positive integer $q$, representing the number of colors. For an integer $n\geq 0$, a \textbf{word} $x$ of length $n$ is an element of $[q]^n$, which we write as $x=x_1x_2\cdots x_n$. For words $x$ and $y$ we write $xy$ for their concatenation. The word $x$ is \textbf{proper} if $x_i\not=x_{i+1}$ for all $i\in [n-1]$. The word $x$ is \textbf{cyclically proper} if it is proper and in addition $x_n\not=x_1$. For $i\in [n]$ and $x\in [q]^n$ we denote the word $x_1x_2\cdots x_{i-1}x_{i+1}\cdots x_n$ by $\hat x_i$. The empty word (i.e., the unique element of $[q]^0$) is denoted by $\emptyset$.

  Recursively define the function $B^{\circ}$ on $[q]^n$ via
  \begin{equation}\label{eq:rec}
    B^{\circ}(x)=\mathbbm{1}[\text{$x$ is cyclically proper}]\sum_{i=1}^nB^{\circ}(\hat x_i),\qquad n\geq 1,\ x\in [q]^n,
  \end{equation}
  and $B^{\circ}(\emptyset)=1$. Let $Z^\circ(n,q):=\sum_{y\in[q]^n}B^\circ(y)$. It is easily verified that $Z^\circ(n,q)>0$ for all $q\geq 3$. For all such integers $q$ and for all $n\geq 0$, let $\Cycle_{n,q}$ denote the probability measure on $[q]^n$ given by
  \begin{equation}\label{eq:cmarg}
    \Cycle_{n,q}(\{x\})=\frac{B^\circ(x)}{Z^\circ(n,q)}.
  \end{equation}

  \begin{lemma}\label{lem:shift}
    For $n\geq 0$ and $q\geq 3$, the measure $\Cycle_{n,q}$ is invariant under cyclic shifts.
  \end{lemma}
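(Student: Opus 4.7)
The plan is to prove by induction on $n$ that the unnormalized weight $B^\circ$ itself, and therefore $\Cycle_{n,q}$, is invariant under the single cyclic shift
\[
  \sigma(x_1x_2\cdots x_n) = x_2 x_3\cdots x_n x_1.
\]
Since an arbitrary cyclic shift is an iterate of $\sigma$, this suffices. The base cases $n\in\{0,1\}$ are trivial.

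For the inductive step, the first observation is that the indicator in \eqref{eq:rec} is a cyclic-shift invariant: $x$ is cyclically proper if and only if $\sigma(x)$ is. So it remains to show
\[
  \sum_{i=1}^n B^\circ(\hat{x}_i) \;=\; \sum_{i=1}^n B^\circ\bigl(\widehat{\sigma(x)}_i\bigr).
\]
For the right-hand side, inspecting the definition of $\sigma$ shows that $\widehat{\sigma(x)}_n=\hat{x}_1$, while for $2\leq i\leq n$ the word $\widehat{\sigma(x)}_{i-1}$ is obtained by deleting $x_i$ from $\sigma(x)$, which equals $x_2\cdots x_{i-1}x_{i+1}\cdots x_n x_1$, and this is exactly $\sigma(\hat{x}_i)$. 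By the inductive hypothesis applied to words of length $n-1$, we have $B^\circ\bigl(\sigma(\hat{x}_i)\bigr)=B^\circ(\hat{x}_i)$ for each such $i$. Re-indexing the sum, the two totals agree.

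Combining the two observations gives
\[
  B^\circ(\sigma(x)) = \mathbbm{1}[\sigma(x)\text{ cyclically proper}]\sum_{i=1}^n B^\circ\bigl(\widehat{\sigma(x)}_i\bigr) = \mathbbm{1}[x\text{ cyclically proper}]\sum_{j=1}^n B^\circ(\hat{x}_j) = B^\circ(x),
\]
completing the induction. The statement about $\Cycle_{n,q}$ then follows immediately from \eqref{eq:cmarg} since the normalizing constant $Z^\circ(n,q)$ depends only on $n$ and $q$.

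The only delicate point is bookkeeping the correspondence between the deletions $\widehat{\sigma(x)}_i$ and $\sigma(\hat{x}_{i+1})$; once this is set up, the induction is entirely mechanical, so I do not anticipate any real obstacle.
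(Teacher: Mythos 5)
Your proposal is correct and follows essentially the same route as the paper: induction on $n$ using the single shift, noting that cyclic properness is shift-invariant and that each deleted word of the shifted string is a shift of a deleted word of the original, so the inductive hypothesis makes the two sums in the recurrence rearrangements of each other. The only cosmetic difference is that you carry the indicator through instead of splitting into the cyclically proper and non-proper cases as the paper does.
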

  \begin{proof}
    When $n=0$ there is nothing to prove.
    Fix $n\geq 1$ and $q$. We prove, by induction on $n$, that for all words $x\in[q]^n$ the cyclic shift $\hat x_1x_1$ of $x$ satisfies
    \begin{equation}\label{eq:shift}
      B^\circ(\hat x_1 x_1)=B^\circ(x).
    \end{equation}
    The base case $n=1$ is apparent. Now suppose that \eqref{eq:shift} holds for all words of length $n-1$ and let $x$ be a word of length $n$. Observe that $x$ is cyclically proper if and only if the word $y:=\hat x_1 x_1$ is cyclically proper. If neither $x$ nor $y$ has this property, then $B^\circ(x)=B^\circ(y)=0$ and \eqref{eq:shift} holds trivially. Otherwise,
    \begin{equation}\label{eq:xyrec}
      B^\circ(x)=\sum_{i=1}^nB^\circ(\hat x_i)\qquad\text{ and }\qquad B^\circ(y)=\sum_{i=1}^nB^\circ(\hat y_i).
    \end{equation}

    Observe that for all $i\in[n]$, the word $\hat y_i$ is a cyclic shift of $\hat x_{i+1}$ (where indices are interpreted modulo $n$). Thus $B^\circ(\hat y_i)=B^\circ(\hat x_{i+1})$ by the inductive hypothesis. It follows that the two sums in \eqref{eq:xyrec} are rearrangements of one another, and therefore $B^\circ(x)=B^\circ(y)$.
  \end{proof}

  The following is a useful reformulation of the recurrence \eqref{eq:rec}. It can be interpreted as an instance of the M\"obius inversion formula \cite[Section 3.7]{stanley1997enumerative}.
  \begin{lemma}\label{lem:mrec}
    For all $n,q\geq 1$ and for all words $x\in[q]^n$, we have that
    \begin{equation}\label{eq:mobrec}
      B^\circ(x)=\sum_{i=1}^nB^\circ(\hat x_i)-2\sum_{i=1}^{n}\mathbbm{1}[x_i=x_{i+1}]B^\circ(\hat x_i),
    \end{equation}
    where indices are interpreted modulo $n$.
  \end{lemma}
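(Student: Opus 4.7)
The plan is a direct case analysis on the cyclic defect count $d(x) := \#\{i \in [n] : x_i = x_{i+1}\}$, with indices taken modulo $n$. The key structural observation is that whenever $x_i = x_{i+1}$, deleting position $i$ or position $i+1$ yields the same word, so $\hat x_i = \hat x_{i+1}$; this is what accounts for the factor of $2$ in \eqref{eq:mobrec}.

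If $d(x) = 0$ then $x$ is cyclically proper, every indicator in the second sum of \eqref{eq:mobrec} vanishes, and the identity reduces to the definition \eqref{eq:rec}. Otherwise $B^\circ(x) = 0$, and one must show that the two sums on the right agree up to the factor of $2$. The crucial input is the bound
\[
  d(\hat x_i) = d(x) - \mathbbm{1}[x_{i-1}=x_i] - \mathbbm{1}[x_i = x_{i+1}] + \mathbbm{1}[x_{i-1}=x_{i+1}] \;\geq\; d(x) - 1,
\]
which follows because if both deleted adjacencies are defects then $x_{i-1} = x_i = x_{i+1}$, making the new adjacency also a defect, so the apparent decrease of $2$ is offset by a creation of $1$. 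Since $B^\circ(\hat x_i) \neq 0$ forces $\hat x_i$ to be cyclically proper, only terms with $d(x) \leq 1$ can contribute.

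The subcase $d(x) \geq 2$ is therefore immediate: every $B^\circ(\hat x_i)$ vanishes and both sides of \eqref{eq:mobrec} equal $0$. In the remaining subcase $d(x) = 1$, with the unique defect at index $j$, the same analysis shows that the defect at $j$ persists in $\hat x_i$ whenever $i \notin \{j, j+1\}$, and by the initial observation $\hat x_j = \hat x_{j+1}$. Thus the first sum on the right equals $2B^\circ(\hat x_j)$, while the second has a single nonzero term (at $i = j$) equal to $B^\circ(\hat x_j)$, so the right side is $0 = B^\circ(x)$. The only point genuinely requiring care is the sharpened bound $d(\hat x_i) \geq d(x) - 1$ in the presence of two consecutive defects, which the displayed identity delivers automatically; everything else is bookkeeping.
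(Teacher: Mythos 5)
Your proof is correct and takes essentially the same route as the paper's: split on whether $x$ is cyclically proper, observe that deleting a position away from a defect leaves a defect (so those terms vanish), and use $\hat x_j=\hat x_{j+1}$ at a defect to produce the factor $2$; your defect-count bookkeeping ($d\geq 2$ versus $d=1$) just spells out what the paper achieves by fixing a single defect index $j$. The only shared nicety is that for the wraparound defect $x_n=x_1$ the words $\hat x_n$ and $\hat x_1$ agree only up to a cyclic shift, so strictly one should invoke Lemma~\ref{lem:shift} to equate their $B^\circ$-values --- a point the paper's own proof also elides.
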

  \begin{proof}
    Fix the word $x$. If $x$ is cyclically proper, the latter term on the right side of \eqref{eq:mobrec} vanishes and the result reduces to the defining recurrence for $B^\circ$. If $x$ is not cyclically proper, the left side of \eqref{eq:mobrec} vanishes and it remains to verify that
    \begin{equation}\label{eq:smobrec}
      \sum_{i=1}^nB^\circ(\hat x_i)=2\sum_{i=1}^{n}\mathbbm{1}[x_i=x_{i+1}]B^\circ(\hat x_i).
    \end{equation}
    Fix an index $j\in[n]$ such that $x_j=x_{j+1}$. Then for all $i\in [n]\setminus \{j,j+1\}$, the word $\hat x_i$ is not cyclically proper. Hence the equality \eqref{eq:smobrec} we wish to verify is equivalent to
    $$
      B^\circ(\hat x_j)+B^\circ(\hat x_{j+1})=2B^\circ(\hat x_j),
    $$
    which follows since $\hat x_j=\hat x_{j+1}$.
  \end{proof}

  For $(k,q)\in\{(1,4),(2,3)\}$, the stationary $k$-dependent $q$-coloring $(X_i)_{i\in \Z}$ constructed in \cite{HL} is characterized as follows.
  Recursively define the function $\vec B$ on $[q]^n$ via
  \begin{equation}\label{eq:vrec}
    \vec B(x)=\mathbbm{1}[x\text{ is proper}]\sum_{i=1}^n \vec B(\hat x_i),\qquad n\geq 1,\ x\in [q]^n,
  \end{equation}
  and $\vec B(\emptyset)=1$. Let $\vec Z(n,q)=\sum_{y\in[q]^n}\vec B(y)$. Then for all $n\geq 0$ and for all $x\in [q]^n$,
  \begin{equation}\label{eq:vmarg}
    \mathbb P\bigl((X_1,\ldots,X_n)=x\bigr)=\frac{\vec B(x)}{\vec Z(n,q)}.
  \end{equation}

  In the following lemma and its proof, we use $\star$ to denote a dummy variable that is summed over $[q]$. For example, given a function $f\colon [q]^m\to \mathbb R$ and a word $x$, we write $f(x\star^m)$ as a shorthand for $\sum_{y\in [q]^m}$ $f(xy)$.

  \begin{lemma}\label{lem:rest}
  Fix integers $(k,q)\in \{(1,4),(2,3)\}$ and $n\geq 0$. Then for all words $x\in[q]^n$,
  \begin{equation}\label{eq:resteq}
      B^\circ(x\star^k)=Z^\circ(k,q)\vec B(x).
    \end{equation}
  \end{lemma}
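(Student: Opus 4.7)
The plan is to induct on $n$. The base case $n=0$ is immediate from $B^\circ(\star^k) = Z^\circ(k,q)$ (by the definition of $Z^\circ$) and $\vec B(\emptyset) = 1$. For the inductive step with $n \geq 1$, assume \eqref{eq:resteq} holds for all words of length $n-1$. If $x$ is not proper, then $xy$ is not cyclically proper (indeed, not even proper) for any $y \in [q]^k$, so $B^\circ(xy) = 0$, matching $\vec B(x) = 0$. If instead $x$ is proper, then $\vec B(x) = \sum_{i=1}^n \vec B(\hat x_i)$ by \eqref{eq:vrec}, and applying the inductive hypothesis to each $\hat x_i$ reduces the target identity to the ``recurrence-style'' statement
$$
B^\circ(x\star^k) = \sum_{i=1}^n B^\circ(\hat x_i \star^k).
$$

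To prove this, apply the M\"obius reformulation \eqref{eq:mobrec} of Lemma~\ref{lem:mrec} to $B^\circ(xy)$, viewed as a word of length $n+k$, and sum over $y \in [q]^k$. The unsigned sum splits according to whether the deleted index lies in $x$ or in $y$: indices $j \in [n]$ satisfy $\widehat{(xy)}_j = \hat x_j y$ and together contribute $\sum_{j=1}^n B^\circ(\hat x_j \star^k)$, while indices $j = n+\ell$ with $\ell \in [k]$ satisfy $\widehat{(xy)}_j = x \hat y_\ell$, and since the coordinate $y_\ell$ does not appear in $x \hat y_\ell$ the sum over $y_\ell$ produces a free factor of $q$, contributing $k q \cdot B^\circ(x\star^{k-1})$ in total.

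For the signed indicator term, properness of $x$ kills the contributions at $j \in [n-1]$, leaving only the $k+1$ ``boundary'' indices $j = n, n+1, \ldots, n+k$: the indicator $\mathbbm{1}[x_n = y_1]$ at $j=n$, the indicators $\mathbbm{1}[y_\ell = y_{\ell+1}]$ at $j = n+\ell$ for $\ell \in [k-1]$, and the cyclic-wrap indicator $\mathbbm{1}[y_k = x_1]$ at $j = n+k$. A short bookkeeping check shows that in each of these cases the indicator forces one coordinate of $y$ to equal a prescribed letter, and the sum over the remaining coordinates collapses the term to exactly $B^\circ(x \star^{k-1})$. The signed term is therefore $2(k+1) B^\circ(x \star^{k-1})$, and combining with the unsigned sum yields
$$
B^\circ(x \star^k) - \sum_{i=1}^n B^\circ(\hat x_i \star^k) = \bigl(kq - 2(k+1)\bigr)\, B^\circ(x \star^{k-1}) = \bigl(k(q-2) - 2\bigr)\, B^\circ(x \star^{k-1}).
$$
The coefficient $k(q-2)-2$ vanishes precisely when $(k,q) \in \{(1,4),(2,3)\}$, which closes the induction.

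The main obstacle is the careful bookkeeping in unpacking the M\"obius expansion: one must correctly identify which of the $n+k$ indices contribute in the unsigned vs.\ indicator sums, and verify that each of the $k+1$ boundary terms collapses to the common value $B^\circ(x\star^{k-1})$ so that the ``error'' coefficient is exactly $k(q-2)-2$. The delicate balance between the factor $q$ coming from an unconstrained coordinate and the factor $2(k+1)$ from the signed indicator terms is what isolates the two admissible pairs in the statement of the lemma.
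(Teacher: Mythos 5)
Your proof is correct and follows essentially the same route as the paper: induct on $n$, apply the M\"obius reformulation \eqref{eq:mobrec} to the word $xy$, sum over $y\in[q]^k$, and observe that the unconstrained deletions in $y$ contribute $kq\,B^\circ(x\star^{k-1})$ while the $k+1$ boundary indicators contribute $2(k+1)B^\circ(x\star^{k-1})$, which cancel exactly when $(k,q)\in\{(1,4),(2,3)\}$. The only step you compress (and the paper spells out) is the identification $\mathbbm{1}[x_n=y_1]B^\circ(\hat x_n y)=\mathbbm{1}[x_n=y_1]B^\circ(x\hat y_1)$, but your ``bookkeeping check'' covers it.
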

  \begin{proof}
    We establish \eqref{eq:resteq} by induction on $n$. When $n=0$ or when $x$ is non-proper it holds trivially. Now suppose that \eqref{eq:resteq} holds for all words of length $n-1$ and that $x$ is proper. For any word $y\in [q]^k$, we have by Lemma~\ref{lem:mrec} that
    \begin{equation*}\label{eq:rbc}
      B^\circ(xy)=\sum_{i=1}^{n+k}B^\circ(\hat{xy}_i)-2\mathbbm{1}[x_n=y_1]B^\circ(\hat x_ny)-2\sum_{i=1}^k\mathbbm{1}[y_i=y_{i+1}]B^\circ(x\hat y_i),
    \end{equation*}
    where we have set $y_{k+1}=x_1$ (this occurs in the final summand of the second term). The term $2\mathbbm{1}[x_n=y_1]B^\circ(\hat x_ny)$ in the right side of the above equation is seen to equal $2\mathbbm{1}[x_n=y_1]B^\circ(x\hat y_1)$, and thus
    \begin{equation}\label{eq:rbc2}
      B^\circ(xy)=\sum_{i=1}^{n+k}B^\circ(\hat{xy}_i)-2\mathbbm{1}[x_n=y_1]B^\circ(x\hat y_1)-2\sum_{i=1}^k\mathbbm{1}[y_i=y_{i+1}]B^\circ(x\hat y_i).
    \end{equation}

    Next we sum \eqref{eq:rbc2} over $y\in [q]^k$ to obtain that
    \begin{align*}
      B^\circ(x\star^k)&=\sum_{i=1}^{n+k}\sum_{y\in[q]^k}B^\circ(\hat{xy}_i)-2(k+1)B^\circ(x\star^{k-1})\\
      &=\sum_{i=1}^nB^\circ(\hat x_i\star^k)+qkB^\circ(x\star^{k-1})-2(k+1)B^\circ(x\star^{k-1}),
    \end{align*}
    where the coefficient $qk$ in the second term counts the number of choices of which index of $y$ to delete together with the deleted value, and the coefficient $2(k+1)$ in the third term arises from the $k+1$ indicator functions appearing in \eqref{eq:rbc2}.
    Since $qk=2(k+1)$ for both pairs $(k,q)\in\{(1,4),(2,3)\}$, it therefore follows that
    $$
      B^\circ(x\star^k)=\sum_{i=1}^nB^\circ(\hat x_i\star^k).
    $$
    Substituting the inductive hypothesis into this equation, using \eqref{eq:vrec}, and recalling that $x$ is proper yields \eqref{eq:resteq}.
  \end{proof}

  \begin{lemma}\label{lem:partition}
    For all $n\geq 2$ and for all $q\geq 2$, we have that
    $$
      Z^\circ(n,q)=n!\, q(q-1)(q-2)^{n-2}.
    $$
  \end{lemma}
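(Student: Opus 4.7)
The plan is to derive a one-step recurrence
\[
Z^\circ(n,q) = n(q-2)\,Z^\circ(n-1,q) \qquad (n \ge 3)
\]
by summing the M\"obius identity of Lemma~\ref{lem:mrec} over $x \in [q]^n$, combine it with a direct base case $Z^\circ(2,q) = 2q(q-1)$, and iterate.

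For the base case, unfold~\eqref{eq:rec}: the cyclically proper length-$2$ words $x_1 x_2$ are the $q(q-1)$ pairs with $x_1 \ne x_2$, each giving $B^\circ(x_1 x_2) = B^\circ(x_1) + B^\circ(x_2) = 2$. Here one uses that $B^\circ \equiv 1$ on single letters, the natural interpretation of~\eqref{eq:rec} at $n=1$ (treating single letters as cyclically proper, which is the reading consistent with the formula to be proved).

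For the inductive step, sum~\eqref{eq:mobrec} over $x \in [q]^n$ and interchange the order of summation. In the first term, for each fixed $i \in [n]$ the map $x \mapsto \hat x_i$ is $q$-to-$1$ onto $[q]^{n-1}$ (fibers differ only in the deleted letter $x_i$), so $\sum_x B^\circ(\hat x_i) = q\,Z^\circ(n-1,q)$ per $i$, and $nq\,Z^\circ(n-1,q)$ in total. In the second term, for each fixed $i$ and $y = \hat x_i$, the letter $x_{i+1}$ is determined by $y$ (it equals $y_i$ when $i<n$ and $y_1$ when $i=n$), so the constraint $x_i = x_{i+1}$ uniquely pins down the inserted letter; each $y$ thus contributes $B^\circ(y)$ exactly once, giving $Z^\circ(n-1,q)$ per $i$ and $n\,Z^\circ(n-1,q)$ overall. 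Combining yields the asserted recurrence.

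Iterating from $n=2$ produces
\[
Z^\circ(n,q) = \Bigl(\prod_{k=3}^n k(q-2)\Bigr) Z^\circ(2,q) = \tfrac{n!}{2}(q-2)^{n-2} \cdot 2q(q-1) = n!\,q(q-1)(q-2)^{n-2}.
\]
The main bookkeeping point is the wrap-around in the second sum at $i = n$, where $x_{i+1}$ must be interpreted as $x_1 = y_1$; handling this uniformly with the internal indices is what makes the constrained sum collapse cleanly to $n\,Z^\circ(n-1,q)$, producing the factor of $n$ (rather than $n-1$) and thus the factorial in the final formula.
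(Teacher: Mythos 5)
Your proof is correct, and it takes a somewhat different route from the paper's, though both rest on the same one-step recurrence. The paper also argues by induction via $Z^\circ(n+1,q)=(n+1)(q-2)\,Z^\circ(n,q)$, but it obtains this by summing the defining recurrence \eqref{eq:rec} over all words and evaluating, for each fixed deletion position $i$, the sum $\sum_{x}\mathbbm{1}[x\text{ is cyclically proper}]B^\circ(\hat x_i)$ as $(q-2)Z^\circ(n,q)$, i.e.\ by the observation that a cyclically proper word admits exactly $q-2$ admissible insertions in any given gap (words that are not cyclically proper contributing nothing). You instead sum the M\"obius form \eqref{eq:mobrec} of Lemma~\ref{lem:mrec} over all of $[q]^n$, getting $nq\,Z^\circ(n-1,q)$ from the unconstrained term and $2n\,Z^\circ(n-1,q)$ from the constrained term; this is precisely the bookkeeping used in the paper's proof of Lemma~\ref{lem:rest} (where the corresponding count is $qk-2(k+1)$), transplanted to the partition function, and your handling of the wrap-around index $i=n$ is right. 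The two computations are equivalent pieces of inclusion--exclusion, so neither buys much over the other: yours reuses Lemma~\ref{lem:mrec} and avoids the separate ``$q-2$ insertions per gap'' count, while the paper's avoids invoking Lemma~\ref{lem:mrec}, whose statement is delicate at length $2$ --- and since you apply it only for $n\geq 3$, no issue arises. Your explicit remark that the base case requires reading \eqref{eq:rec} so that single-letter words have $B^\circ=1$ addresses a convention the paper leaves implicit (its ``base case $n=2$ is trivial to verify'' uses the same reading), which is a fair and careful way to handle it.
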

  \begin{proof}
    The proof is by induction on $n$. The base case $n=2$ is trivial to verify. Now suppose that $Z^\circ(n,q)=n!\,q(q-1)(q-2)^n$, where $n\geq 2$. Summing \eqref{eq:rec} over all words yields that
    \begin{equation}\label{eq:zc}
      Z^\circ(n+1,q)=\sum_{i=1}^{n+1}\sum_{x\in[q]^{n+1}}\mathbbm{1}[x\text{ is cyclically proper}]B^\circ(\hat x_i).
    \end{equation}
    For all $i\in[n+1]$, it is easy to see that
    \begin{equation}\label{eq:zc2}
      \sum_{x\in[q]^{n+1}}\mathbbm{1}[x\text{ is cyclically proper}]B^\circ(\hat x_i)=(q-2)Z^\circ(n,q).
    \end{equation}
    Combining \eqref{eq:zc}, \eqref{eq:zc2}, and the inductive hypothesis yields the result.
  \end{proof}

\section{Indexing}\label{sec:index}
  We introduce a general procedure for constructing cycle-indexed processes by random insertion. We will apply this first to the coloring itself; later we will apply it to other processes in order to prove Theorem~\ref{onetwostar}.
  To establish our remaining results we consider the following dynamical probabilistic insertion scheme. Let $n\geq 3$. Given a random sequence $X^{(n)}=(X_1,\ldots,X_n)$, we will choose a uniformly random index $I\in[n]$ (which will be independent of $X^{(n)}$) and insert a new element $Z$ (which can depend on $I$ and $X^{(n)}$) just before element $X_I$ to give
  \begin{equation}\label{eq:insert}
    (X_1,\ldots,X_{I-1},Z,X_I,\ldots,X_n).
  \end{equation}

  Here an indexing issue arises. We want to interpret the above sequence as a labelling of the $(n+1)$-cycle, but we would like the random insertion location to be uniform \textit{after} the insertion has taken place as well as before it. Simply defining $X^{(n+1)}$ to equal \eqref{eq:insert} does not achieve this (because the inserted element $Z$ can never end up at location $n+1$, for instance). Since all the random sequences that we consider are invariant under rotations of the cycle, we instead apply a random rotation after the insertion step. That is, we let $R$ be uniformly random in $[n+1]$ independent of all previous choices, and set $X^{(n+1)}$ to be the image of \eqref{eq:insert} under the map $\tau_R$, where $\tau_r$ is the rotation
  $$
    \tau_r\colon(x_1,\ldots,x_{n+1})\mapsto (x_{r+1},\ldots,x_{r+n+1}),
  $$
  where the indices are interpreted modulo $n+1$.

  In the following lemma we show that the above insertion procedure yields a coupling of the measures $(\Cycle_n)_{n\geq 3}$.

  \begin{lemma}\label{lem:coupling}
    Fix $q\geq 1$. For each $n\geq 3$, let $X^{(n)}=(X_1,\ldots,X_n)$ be random with law $\Cycle_{n,q}$. Let $I\in[n]$ be uniformly random and independent of $X^{(n)}$. Conditional on $I$ and $X^{(n)}$, let $Z$ be a uniformly random element of the set $[q]\setminus \{X_{I-1},X_I\}$ (where indices are interpreted modulo $n$). Finally, let $R\in[n+1]$ be uniformly random and independent of $I, X^{(n)}, $ and $Z$. Then
    $$
      \tau_R(X_1,\ldots,X_{I-1},Z,X_I,\ldots,X_n)\eqd X^{(n+1)}.
    $$
  \end{lemma}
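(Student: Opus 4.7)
The plan is to compute the law $\mu'$ of the output sequence directly for each target word $w\in[q]^{n+1}$ and to verify $\mu'(w) = \Cycle_{n+1,q}(w) = B^\circ(w)/Z^\circ(n+1,q)$. Let $J := I - R \bmod (n+1)$ denote the final position of the inserted bead $Z$ after the rotation. Since $(I,R)$ is uniform on $[n]\times[n+1]$, the variable $J$ is uniform on $[n+1]$, and given $J=j$ there are exactly $n$ equally-likely pairs $(i,r)$ with $i-r\equiv j$. For each such pair, the requirement that the output equal $w$ forces $Z = w_j$ together with the constraint that $X^{(n)}$ equal a specific cyclic shift of $\hat w_j$; by Lemma~\ref{lem:shift}, this shift has probability $B^\circ(\hat w_j)/Z^\circ(n,q)$ under $\Cycle_{n,q}$ regardless of which shift is required. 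The conditional probability that $Z=w_j$ equals $\mathbbm{1}[w_j\notin\{w_{j-1},w_{j+1}\}]/|[q]\setminus\{w_{j-1},w_{j+1}\}|$ (indices mod $n+1$). Combining these factors and summing over $j$ yields
$$\mu'(w) = \frac{1}{(n+1)Z^\circ(n,q)}\sum_{j=1}^{n+1}\frac{\mathbbm{1}[w_j\notin\{w_{j-1},w_{j+1}\}]\,B^\circ(\hat w_j)}{|[q]\setminus\{w_{j-1},w_{j+1}\}|}.$$

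Next, observe the crucial cancellation: if $w_{j-1}=w_{j+1}$, so the denominator would be $q-1$, then $\hat w_j$ contains the adjacency $w_{j-1}w_{j+1}$ and is therefore non-proper, forcing $B^\circ(\hat w_j)=0$. So every surviving summand has denominator exactly $q-2$. If $w$ is cyclically proper, the indicator is automatically $1$ on all terms, and the defining recurrence \eqref{eq:rec} identifies $\sum_j B^\circ(\hat w_j)$ with $B^\circ(w)$. If $w$ is not cyclically proper, then on every $j$ where the indicator is nonzero the same argument still makes $\hat w_j$ non-proper, so the sum vanishes and $\mu'(w)=0=\Cycle_{n+1,q}(w)$. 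Finally Lemma~\ref{lem:partition} supplies $(n+1)(q-2)Z^\circ(n,q) = Z^\circ(n+1,q)$, completing the match $\mu'(w)=B^\circ(w)/Z^\circ(n+1,q)$.

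The main bookkeeping hurdle is tracking the interplay between $I$ and $R$: one must verify both that $J$ is uniform on $[n+1]$ (so that the $n$-fold multiplicity of admissible $(i,r)$ for each $j$ cancels the $1/n$ factor coming from $I$), and that for each such pair the required value of $X^{(n)}$ is indeed a well-defined cyclic shift of $\hat w_j$. After that, the shift-invariance of $\Cycle_{n,q}$ from Lemma~\ref{lem:shift} erases the dependence on which shift, reducing everything to $B^\circ(\hat w_j)$. The other delicate point is the coincidence that the nominally-possible $q-1$ denominators never actually contribute, because precisely those $j$ have $B^\circ(\hat w_j)=0$; this is what makes the ratio $Z^\circ(n+1,q)/Z^\circ(n,q)$ from Lemma~\ref{lem:partition} exactly reproduce the normalizer.
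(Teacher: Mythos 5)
Your proposal is correct and follows essentially the same route as the paper: a direct computation of the output law, grouping the pairs $(i,r)$ by the final insertion position $j\equiv i-r \pmod{n+1}$ (each hit $n$ times), invoking Lemma~\ref{lem:shift} to reduce each term to $B^\circ(\hat w_j)/Z^\circ(n,q)$, then the recurrence \eqref{eq:rec} and Lemma~\ref{lem:partition} to recover $B^\circ(w)/Z^\circ(n+1,q)$. Your explicit handling of the $q-1$ versus $q-2$ denominator (noting $B^\circ(\hat w_j)=0$ when $w_{j-1}=w_{j+1}$) is a detail the paper's proof glosses over, but it is the same argument in substance.
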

  \begin{proof}
    We show that for all words $x\in[q]^{n+1}$,
    \begin{equation}\label{eq:insCyc}
      \mathbb P\bigl(\tau_R(X_1,\ldots,X_{I-1},Z,X_I,\ldots,X_n)=x\bigr)=\mathbb P\bigl(X^{(n+1)}=x\bigr).
    \end{equation}
    Indeed, the probability appearing on the left side of \eqref{eq:insCyc} can be written as
    $$
      \sum_{r=1}^{n+1}\mathbb P(R=r)\sum_{i=1}^n\mathbb P(I=i)\mathbb P\bigl((X_1,\ldots,X_{i-1},Z,X_i,\ldots,X_n)=\tau_{-r}x\bigr),
    $$
    which by the definitions of $R,I,Z,$ and $\Cycle_{n,q}$ in \eqref{eq:cmarg} equals
    \begin{equation}\label{eq:insCyc3}
      \frac{1}{n(n+1)}\sum_{r=1}^{n+1}\sum_{i=1}^n\frac{\mathbbm{1}[x\text{ is cyclically proper}]}{q-2}\frac{B^\circ(\hat{\tau_{-r}x}_i)}{Z^\circ(n,q)}.
    \end{equation}
    The word $\hat{\tau_{-r}x}_i$ is a cyclic shift of $\hat{x}_{i-r}$, where indices are interpreted modulo $n+1$. Thus by Lemma~\ref{lem:shift}, we have that \eqref{eq:insCyc3} equals
    \begin{equation}\label{eq:insCyc4}
      \frac{\mathbbm{1}[x\text{ is cyclically proper}]}{n(n+1)(q-2)Z^\circ(n,q)}\sum_{r=1}^{n+1}\sum_{i=1}^nB^\circ(\hat{x}_{i-r}).
    \end{equation}
    For each $j\in[n+1]$, there are $n$ pairs $(r,i)\in[n+1]\times[n]$ for which $i-r\equiv j\mod{(n+1)}$. Combining this observation with Lemma~\ref{lem:partition} yields that \eqref{eq:insCyc4} equals
    \begin{equation}\label{eq:insCyc5}
      \frac{\mathbbm{1}[x\text{ is cyclically proper}]}{Z^\circ(n+1,q)}\sum_{j=1}^{n+1}B^\circ(\hat x_j).
    \end{equation}
    By \eqref{eq:rec} and the definition of $\Cycle_{n+1,q}$, the right side of \eqref{eq:insCyc5} equals that of \eqref{eq:insCyc}.
  \end{proof}

  \begin{proof}[Proof of Theorem~\ref{thm:main}]
    We prove by induction on $n\geq 1$ that the coloring $(C_1,\ldots,C_{n+2})$ arising from the Eden model has law $\Cycle_{n+2,q}$. This is clear for $n=1$, when both colorings are uniformly random. Next suppose this holds for $n$. Recall that $(C_1,\ldots,C_{n+2})$ is constructed from an Eden model cluster $T_n$, a uniform proper $q$-coloring of its dual triangulation $D_n$, and an independent uniform vertex $u$ of $D_n$. One obtains a coupling with the corresponding objects at time $n+1$ as follows. Choose a uniform triangle in $\mathbb D$ sharing an edge with the outer boundary of $D_n$, append it to $D_n$ to form $D_{n+1}$, choose a uniformly random non-clashing color for the new vertex, and finally select an independent uniform vertex of $D_{n+1}$. By Lemma~\ref{lem:coupling} it follows that the coloring of the $(n+3)$-cycle obtained from these data has law $\Cycle_{n+3,q}$, completing the induction.

    \sloppypar Combining the result of the previous paragraph with \eqref{eq:cmarg}, \eqref{eq:vmarg} , and Lemma~\ref{lem:rest}, yields the restriction property. Applying \cite[Theorem 1]{HL} yields $k$-dependence.
  \end{proof}

\section{Markov chains}\label{sec:markov}
  We give simple conceptual proofs that the 1- and 2-color marginals of the 3- and 4-colorings on the cycle are as claimed. Before giving the details, we explain the structure of the proof. For fixed $q\in\{3,4\}$, let
  $$
  X^{(n)}=(X_1,\ldots,X_n)=\bigl(X_1^{(n)},\ldots,X_n^{(n)}\bigr)
  $$
  be a coloring with law $\Cycle_{n,q}$. Recall that we are interested in the indicator of the support of one or two of the colors, i.e. the binary vector $J^{(n)}=(J_1,\ldots,J_n)$ where
  $$
  J_i=J_i^{(n)}:=\mathbbm{1}[X_i^{(n)}\in A],
  $$
  and where $A$ is $\{1\}$ or $\{1,2\}$. Our goal is to show for each $n$ that $J^{(n)}$ is equal in law to a certain `target' binary vector $Q^{(n)}$ that is defined in terms of random permutations or binary strings.

  We will couple the random vectors $J^{(n)}$ for $n\geq 3$ by considering them as the states of a Markov chain. We will specify the Markov transition rule from $J^{(n)}$ to $J^{(n+1)}$. Then we will construct another Markov chain whose state at time $n$ is equal in law to the target random vector $Q^{(n)}$. Finally we will show that even though their descriptions differ, the two Markov transition rules coincide.

  In fact, we will couple the colorings $X^{(n)}$ themselves via a Markov chain indexed by $n$. The vector $J^{(n)}$ is a deterministic function, $F$ say, of $X^{(n)}$. In general, applying a function to the state of a Markov chain does not yield another Markov chain. However, this \textit{will} hold in our case. Specifically, it will follow because the conditional law of $F(X^{(n+1)})$ given $X^{(n)}=x$ will be identical for all $x$ having the same image under $F$. The same remarks will apply to the target laws also: the random vector $Q^{(n)}$ can be expressed as a deterministic function of a random permutation or a binary string. We will give a natural Markov chain on permutations or binary strings which yields a Markov chain for $Q^{(n)}$.

  All the Markov chains we consider will involve random insertions in the cycle. These insertions take the form described in the previous section. Namely, given $X^{(n)}=(X_1,\ldots,X_n)$, we will choose a uniformly random index $I\in\{1,\ldots,n\}$ (independently of $X^{(n)}$), insert a new element just before element $X_I$ (which can depend on $I$ and $X^{(n)}$), and apply a uniformly random rotation (independently of $I$ and $X^{(n)}$).

  We now proceed with the details of the proofs.

  \begin{proof}[Proof of Theorem~\ref{thm:onetwo}(i)]
    We show that the locations of colors 1 and 2 in the 4-coloring of the $n$-cycle have the same joint law as the descent set of a uniform permutation of $n$-cycle for all $n\geq 3$. This is easy to verify directly when $n=3$, in which case both random sets are uniformly distributed over subsets of size 1 or 2. To prove the result for other $n$, we show that the Markov transition rules are equivalent.

    Let $X^{(n)}$ be random and $\Cycle_{n,4}$-distributed. Define the function $F$ as follows:
    $$
      F(x_1,\ldots,x_n):=\bigl(\mathbbm{1}[x_i\in\{1,2\}]\bigr)_{i=1}^n.
    $$
    Set $J^{(n)}:=F(X^{(n)})$ to be the binary vector indicating the locations of colors 1 and 2.

    The Markov transition rule for $J^{(n)}$ obtained by applying the function $F$ to the transition rule in Lemma~\ref{lem:coupling} is the following. Conditional on $J^{(n)}$, sample $(I,B)$ uniformly from $[n]\times \{0,1\}$, let
    $$
      Z=\begin{cases}
        1-J_I,& J_{I-1}=J_I\\
        B,& J_{I-1}\not=J_I
      \end{cases},
    $$
    and take $J^{(n+1)}$ to be the image of $(J_1,\ldots,J_{I-1},Z,J_I,\ldots,J_n)$ under an independent uniformly random rotation. This transition rule is equivalent to choosing a random insertion location, querying the bit of one of the two neighbors at random, and inserting the complement of the queried bit.

    Let $\Pi^{(n)}=(\Pi_1,\ldots,\Pi_n)\in[n]^n$ be a uniformly random permutation. That is, $\Pi^{(n)}$ is distributed uniformly over the set of words in $[n]^n$ in which each symbol appears exactly once. Let $G$ be the function
    $$
      G(x_1,\ldots,x_n):=\bigl(\mathbbm{1}[x_i>x_{i+1}]\bigr)_{i=1}^n,
    $$
    and set $Q^{(n)}:=G(\Pi^{(n)})$.

    Before describing the transition rule for $Q^{(n)}$, we describe one for $\Pi^{(n)}$. Given $\Pi^{(n)}$, let $I\in [n]$ be an independent uniformly random index and let $\Pi^{(n+1)}$ be the result of applying a uniformly random rotation to one of the two tuples
    $$
      (\Pi_1,\ldots,\Pi_{I-1},n+1,\Pi_I,\ldots,\Pi_n)\text{ or }(\Pi_1+1,\ldots,\Pi_{I-1}+1,1,\Pi_I+1,\ldots,\Pi_n+1),
    $$
    chosen uniformly at random and independently of the rotation and $I$. It is easily seen that the law of $\Pi^{(n+1)}$ is uniform on the set of permutations.

    The Markov transition rule for $Q^{(n)}$ obtained by applying the function $G$ to the transition rule of $\Pi^{(n)}$ is the following. Conditional on $Q^{(n)}$, sample $(I,B)$ uniformly from $[n]\times \{0,1\}$ and take $Q^{(n+1)}$ to be the image of
    $(Q_1,\ldots,Q_{I-2},B,1-B,Q_I,\ldots,Q_n)$ under an independent uniformly random rotation. This transition rule is equivalent to choosing a random symbol and replacing it with one of the strings $01$ or $10$, chosen uniformly at random and independently of $Q^{(n)}$ and $I$.

    The transition rules for $J^{(n)}$ and $Q^{(n)}$ coincide, as both are equivalent to the following. Conditional on $J^{(n)}=Q^{(n)}=x$, let $U$ be chosen uniformly at random from the set
    $$
      \bigl\{i-\tfrac14\bigr\}_{i=1}^n\cup\bigl\{i+\tfrac14\bigr\}_{i=1}^n
    $$
    and let $V$ denote the integer closest to $U$. Insert the bit $1-x_V$ between positions $\lfloor U\rfloor$ and $\lceil U\rceil$ of $x$.

    Since $J^{(3)}$ and $Q^{(3)}$ have the same law and the Markov transition rules for $J^{(n)}$ and $Q^{(n)}$ agree for all $n\geq 3$, it follows that $J^{(n)}$ and $Q^{(n)}$ are equal in law.
  \end{proof}

  \begin{proof}[Proof of Theorem~\ref{thm:onetwo}(ii)]
    We show that for all $n\geq 3$, the locations of color 1 in the 3-coloring of the $n$-cycle have the same joint law as the peak set of a uniformly random permutation of the $n$-cycle. This is easy to verify directly when $n=3$, in which case both random sets are uniformly distributed over the singletons.

    Let $X^{(n)}$ be random and $\Cycle_{n,3}$-distributed, let $F$ be the function
    $$
      F(x_1,\ldots,x_n):=\bigl(\mathbbm{1}[x_i=1]\bigr)_{i=1}^n,
    $$
    and let $J^{(n)}:=F(X^{(n)})$. Take $\Pi^{(n)}=(\Pi_1,\ldots,\Pi_n)$ to be a uniformly random permutation and let $G$ be the function
    $$
      G(x_1,\ldots,x_n):=\bigl(\mathbbm{1}[x_{i-1}<x_i>x_{i+1}]\bigr)_{i=1}^n,
    $$
    with indices interpreted modulo $n$. Set $Q^{(n)}:=G(\Pi^{(n)})$.

    The Markov transition rule for $J^{(n)}$ obtained by applying the function $F$ to the transition rule in Lemma~\ref{lem:coupling} is the following. Conditional on $J^{(n)}$, sample $I$ uniformly from $[n]$, let
    $$
      Z=\begin{cases}
        1,& J_{I-1}=J_I=0\\
        0,& \text{otherwise}
      \end{cases},
    $$
    and take $J^{(n+1)}$ to be the image of $(J_1,\ldots,J_{I-1},Z,J_I,\ldots,J_n)$ under an independent uniformly random rotation.

    Before describing the transition rule for $Q^{(n)}$, we describe one for $\Pi^{(n)}$. Given $\Pi^{(n)}$, let $I\in[n]$ be an independent uniformly random index and let $\Pi^{(n+1)}$ be the result of applying a uniformly random rotation to
    $$
      (\Pi_1,\ldots,\Pi_{I-1},n+1,\Pi_I,\ldots,\Pi_n).
    $$
    It is easily seen that the law of $\Pi^{(n+1)}$ is uniform on the set of permutations.

    The Markov transition rule for $Q^{(n)}$ obtained by applying the function $G$ to the transition rule of $\Pi^{(n)}$ is the following. Conditional on $Q^{(n)}$, sample $I$ uniformly from $[n]$ and take $Q^{(n+1)}$ to be the image of $(Q_1,\ldots,Q_{I-2},0,1,0,Q_{I+1},\ldots,Q_n)$ under an independent uniformly random rotation.

    We show that the transition rules for $J^{(n)}$ and $Q^{(n)}$ coincide. The state space consists of binary strings that lack adjacent ones. We identify any such a string, $x$, with a partition of the set $\bigl\{i+\tfrac12\bigr\}_{i=1}^n$, by regarding the ones as endpoints of blocks. Both transition rules are equivalent to the following. Conditional on $J^{(n)}=Q^{(n)}=x$, let $U$ be uniformly random element of the set $\bigl\{i+\tfrac12\bigr\}_{i=1}^n$. If all of $U-1$, $U$, and $U+1$ belong to the same block of $x$, split the block at $U$ by inserting a one. Otherwise, grow the length of the block containing $U$ by inserting a zero.

    Since $J^{(3)}$ and $Q^{(3)}$ have the same law and the Markov transition rules for $J^{(n)}$ and $Q^{(n)}$ agree for all $n\geq 3$, it follows that $J^{(n)}$ and $Q^{(n)}$ are equal in law.
  \end{proof}

  We use the following term in the proof of Theorem~\ref{thm:onetwo}(iii). A \textbf{hard-core} process on a graph $G$ is a process $(J_v)_{v\in V}$ such that each $J_v$ takes values in $\{0,1\}$, and almost surely we do not have $J_u=J_v=1$ for adjacent vertices $u,v$.
  \begin{proof}[Proof of Theorem~\ref{thm:onetwo}(iii)]
    Both process are hard-core and 1-dependent. As shown in \cite{HL} (in the proof of Lemma~23), the law of a 1-dependent hard-core process on any graph is determined by its one-vertex marginals. Since this equals $\tfrac14$ for both processes, it follows that they are equal in law.
  \end{proof}
  We remark that Theorem~\ref{thm:onetwo}(iii) may also be proven via an argument involving Markov transition rules, similar to the proofs of Theorem~\ref{thm:onetwo}(i) and (ii), which we now sketch. The transition rule for the ones process inserts a random bit between two zeros, and otherwise inserts a zero. In terms of the half-integer blocks picture, you grow the block if the insertion location is at an endpoint, and otherwise you flip a coin to decide if to split or grow a block. The $\mathbbm{1}[B_i>B_{i+1}]$ process can also be interpreted in this way.

\section{Block factor}\label{sec:block}
  Recall that $\iota:{1,2,3,4}\to\{1,2,\Star\}$ is the function that fixes $1$ and
  $2$ but maps $3$ and $4$ to $\Star$.  Recall that a process
  $X=(X_i)_{i\in\Z}$ is said to be an $r$-block-factor of $U=(U_i)_{i\in\Z}$ if
  $X_i=f(U_{i},\ldots,U_{i+r-1})$ for all $i$ and a fixed function $f$.

  In this section we prove Theorem~\ref{onetwostar}.
  The proof uses the following lemma which characterizes
  $2$-block-factor hard-core processes.

  \begin{lemma}\label{rectangle}
    Suppose that a hard-core process $Z=(Z_i)_{i\in\Z}$ is a
    $2$-block-factor of an i.i.d.\ process $U$.  Then, almost
    surely
    $$Z_i=\mathbbm{1}\bigl[(U_i,U_{i+1})\in S\bigr],\quad i\in\Z,$$
    for some sets $S$ and $A$ satisfying
    $$S\subseteq A\times A^C.$$
     Moreover, we have $\E Z_0\leq \tfrac14$, with
    equality if and only if $\P(U_0\in A)=\tfrac12$ and
    $S=A\times A^C$ up to null sets.
  \end{lemma}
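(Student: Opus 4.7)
The plan is to translate the hard-core constraint into a pointwise condition on the block-factor function and then extract the rectangular structure by a Fubini argument. Since $Z$ takes values in $\{0,1\}$ and is a $2$-block-factor, there is a deterministic (measurable) function $f$ with $Z_i=f(U_i,U_{i+1})$; set $S:=f^{-1}(1)$, so that $Z_i=\mathbbm{1}[(U_i,U_{i+1})\in S]$ holds pointwise. Write $\mu$ for the common law of $U_0$ and, for each $u$, define
$$\alpha(u):=\mu\bigl\{v:(u,v)\in S\bigr\},\qquad \beta(u):=\mu\bigl\{v:(v,u)\in S\bigr\},$$
which are measurable functions of $u$ by Fubini.

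The first step is to encode hard-coreness. The event $\{Z_i=Z_{i+1}=1\}$ equals $\{(U_i,U_{i+1})\in S\}\cap\{(U_{i+1},U_{i+2})\in S\}$; conditioning on $U_{i+1}=u$ and using independence of $U_i$ and $U_{i+2}$ gives
$$0=\P(Z_i=Z_{i+1}=1)=\int \alpha(u)\beta(u)\,d\mu(u),$$
so $\alpha(u)\beta(u)=0$ for $\mu$-a.e.\ $u$. Defining $A:=\{u:\alpha(u)>0\}$, we have $\beta=0$ on $A$ and $\alpha=0$ on $A^C$, modulo $\mu$-null sets.

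Next I would extract the rectangular form via Fubini. Since $\alpha=0$ on $A^C$, the set $S\cap(A^C\times \mathbb R)$ has $(\mu\times\mu)$-measure $\int_{A^C}\alpha\,d\mu=0$; since $\beta=0$ on $A$, the set $S\cap(\mathbb R\times A)$ has measure $\int_A\beta\,d\mu=0$. Modifying $S$ on a $(\mu\times\mu)$-null set we obtain $S\subseteq A\times A^C$, which implies the asserted a.s.\ formula for $Z_i$. Finally, with $p:=\mu(A)=\P(U_0\in A)$,
$$\E Z_0=(\mu\times\mu)(S)\le (\mu\times\mu)(A\times A^C)=p(1-p)\le \tfrac14.$$
Equality throughout forces $p=\tfrac12$ (from the second inequality) and $(\mu\times\mu)(S)=(\mu\times\mu)(A\times A^C)$ (from the first), which together with $S\subseteq A\times A^C$ up to null sets gives $S=A\times A^C$ up to null sets.

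There is no genuinely hard step; the mildly delicate part is the null-set bookkeeping, in particular being explicit that $A$ is defined up to $\mu$-null sets and that the representation $S\subseteq A\times A^C$ holds only after modifying $S$ on a $(\mu\times\mu)$-null set, which is harmless because it changes $Z$ only on an a.s.-null event.
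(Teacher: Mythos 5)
Your proof is correct and follows essentially the same route as the paper's: you define $A$ as the set of first coordinates whose $S$-section has positive measure, obtain $S\subseteq A\times[0,1]$ and $S\subseteq[0,1]\times A^C$ (up to null sets) from Fubini and the hard-core constraint via independence of $U_i$ and $U_{i+2}$, and conclude with the same $p(1-p)\le\tfrac14$ equality analysis. The only difference is cosmetic: you phrase the hard-core step as $\int\alpha\beta\,d\mu=0$ rather than the paper's event-based contradiction, which is a fine (arguably cleaner) way to organize the same argument.
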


  Above and in the following proofs, we say that two sets are equal up to null
  sets if their symmetric difference is a null set.  Similarly, we write
  $E\subseteq F$ up to a null set if $E\setminus F$ is null.  Note that the
  inequality $\E Z_0\leq \tfrac14$ holds even for stationary $1$-dependent
  hard-core processes; see e.g.\ \cite[Corollary~17]{HL}.

  \begin{proof}[Proof of Lemma \ref{rectangle}]
    Without loss of generality, suppose that the $U_i$ are
    uniformly distributed on $[0,1]$.  By definition, any
    $\{0,1\}$-valued $2$-block-factor $Z$ is of the form
    $Z_i=\mathbbm{1}[(U_i,U_{i+1})\in S]$ for some $S\subseteq
    [0,1]^2$.  Our task is to show that $S$ has the claimed
    form. Define
    $$A:=\Bigl\{u_0\in[0,1]: \P\bigl[ (u_0,U_1)\in S \bigr]>0 \Bigr\}.$$

    First note that $S\subseteq A\times[0,1]$ up to a null set by Fubini's theorem.  (Indeed, if
    $\P(U_0\notin A)>0$ then $\P[(U_0,U_1)\in S\mid U_0\notin A]=0$; thus
    $\P[(U_0,U_1)\in S,\;  U_0\notin A]=0$.)

    We claim that also $S\subseteq [0,1]\times A^C$ up to a
    null set.  If not, then the event $E:=\{(U_0,U_1)\in S,\;
    U_1\in A\}$ has positive probability.  But $U_2$ is
    independent of $E$.  Therefore, using the definition of
    $A$, we have $\P[(U_1,U_2)\in S\mid E]>0$.  Hence,
    $$\P(Z_0=Z_1=1)\geq \P\bigl[E,\; (U_1,U_2)\in S\bigr]>0,$$
    which contradicts $Z$ being hard-core, thus proving the
    claim.

    The two inclusions proved above imply that $S\subseteq A\times A^C$ up to a
    null set.  Adjusting $S$ by a null set if necessary, the first assertion of
    the lemma follows.

    To prove the second assertion, note that
    $$\E Z_0=\P\bigl[(U_i,U_{i+1})\in S\bigr]\leq
    \P\bigl[(U_i,U_{i+1})\in A\times A^C\bigr]=a(1-a)\leq
    \tfrac14,$$ where $a:=\P(U_0\in A)$.  Equality throughout
    implies the stated conditions.
  \end{proof}

  \begin{proof}[Proof of Theorem~\ref{onetwostar}]
    Let $X$ be the $1$-dependent $4$-coloring of $\Z$, let $W_i=\iota(X_i)$, and
    suppose for a contradiction that $W$ is a $2$-block factor of the i.i.d.\
    process $U$. Without loss of generality, suppose that the $U_i$ are uniformly
    distributed on $[0,1]$.
     Let $Y_i=\mathbbm{1}[X_i=1]$ and $Z_i=\mathbbm{1}[X_i=2]$. Since $Y$ and $Z$ are functions
    of $W$, they are also $2$-block-factors of $U$.  By color symmetry of the 1-dependent 4-coloring (Theorem~\ref{thm:main} or \cite[Theorem~4(i)]{HL}) or by the inequality in Lemma~\ref{rectangle} we
    have $\E Y_0=\E Z_0=\tfrac14$, and so Lemma \ref{rectangle}
    implies that there exist sets $A,B\subset[0,1]$ each of
    measure $\tfrac12$ such that
    $$Y_i=\mathbbm{1}\bigl[(U_i,U_{i+1})\in A\times A^C\bigr]; \qquad
    Z_i=\mathbbm{1}\bigl[(U_i,U_{i+1})\in B\times B^C\bigr]$$ almost surely.

    We claim that $A=B^C$ up to null sets. Indeed, $A\cap B$
    has positive measure if and only if $A^C\cap B^C$ does. But
    in that case,
    $$\P(Y_0=Z_0=1)\geq \P(U_0\in A\cap B)\,\P(U_1\in A^C\cap B^C)>0,$$ which
    is impossible.

    Thus, up to null sets, $W_i=\Star$ if and only if
    $(U_i,U_{i+1})\in A^2\cup(A^C)^2$.  But this gives the
    wrong law for the process of $\Star$'s. For instance,
    \begin{align*}
      \P(W_0=W_1=\Star)&=\P\bigl[(U_0,U_1,U_2)\in A^3\cup(A^C)^3\bigr]=\tfrac14,\\
      \intertext{whereas Theorem~\ref{thm:onetwo}(iii) gives}
      \P(W_0=W_1=\Star)&=\P\bigl[X_0,X_1\in\{3,4\}\bigr]=\P(U_0<U_1<U_2)=\tfrac16.
      \quad\qedhere
    \end{align*}
  \end{proof}

\section{Open problems}
  \begin{enumerate}
    \item For $(k,q)\in\{(1,5),(2,4),(3,3)\}$ and for all larger $k$ and $q$, is there a color-symmetric $k$-dependent $q$-coloring of the $n$-cycle for all $n\geq 3$? In particular, is there an analogue of the colorings in \cite{hhlMalCol} on the cycle?
    \item Let $\iota\colon \{1,2,3,4\}\to\{1,2,\Star\}$ fix $1,2$ and map $3,4$ to $\Star$. Let $X=(X_i)_{i\in\Z}$ be the 1-dependent 4-coloring of \cite{HL}. Is the process $(\iota(X_i))_{i\in\Z}$ a block factor of an i.i.d.\ process?
  \end{enumerate}

\section*{Acknowledgements}\label{sec:ack}
  AL and TH were supported by internships at Microsoft Research while portions of this work were completed. TH was also supported by a Microsoft Research PhD fellowship.

  \bibliographystyle{habbrv}
  \bibliography{coloring}

\begin{thebibliography}{10}

\bibitem{burton1993}
R.~M. Burton, M.~Goulet, and R.~Meester.
\newblock On 1-dependent processes and $k$-block factors.
\newblock {\em Ann. Probab.}, 21(4):2157--2168, 1993.

\bibitem{eden1961two}
M.~Eden.
\newblock A two-dimensional growth process.
\newblock {\em Dynamics of fractal surfaces}, 4:223--239, 1961.

\bibitem{H}
A.~E. Holroyd.
\newblock One-dependent coloring by finitary factors.
\newblock {\em Ann. Inst. Henri Poincar\'e Probab. Stat.}, 53(2):753--765,
  2017.

\bibitem{hhlMalCol}
A.~E. Holroyd, T.~Hutchcroft, and A.~Levy.
\newblock Mallows permutations and finite dependence.
\newblock arXiv:1706.09526, 2017.

\bibitem{HL2}
A.~E. Holroyd and T.~M. Liggett.
\newblock Symmetric 1-dependent colorings of the integers.
\newblock {\em {E}lectron. {C}ommun. {P}robab.}, 20:no. 31, 8, 2015.

\bibitem{HL}
A.~E. Holroyd and T.~M. Liggett.
\newblock Finitely dependent coloring.
\newblock {\em Forum of Mathematics, Pi}, 4(e9), 2016.

\bibitem{ibragimov1965independent}
I.~Ibragimov and Y.~V. Linnik.
\newblock Independent and stationarily connected variables.
\newblock {\em {I}zdat. {N}auka, Moscow}, 1965.

\bibitem{mallowsSheppNecklace}
C.~Mallows and L.~Shepp.
\newblock The necklace process.
\newblock {\em J. Appl. Probab.}, 45(1):271--278, 2008.

\bibitem{nakataNecklace}
T.~Nakata.
\newblock Necklace processes via {P}\'olya urns.
\newblock {\em J. Appl. Probab.}, 46(1):284--295, 2009.

\bibitem{stanley1997enumerative}
R.~P. Stanley.
\newblock {E}numerative {C}ombinatorics. vol. 1, vol. 49 of {C}ambridge
  {S}tudies in {A}dvanced {M}athematics, 1997.

\end{thebibliography}

\end{document}